\newcommand{\bbC}{{\mathbbm{C}}}
\newcommand{\bbR}{{\mathbbm{R}}}
\newcommand{\bbT}{{\mathbbm{T}}}
\newcommand{\bbZ}{{\mathbbm{Z}}}
\newcommand{\bolde}{{\bm{e}}}
\newcommand{\boldk}{{\bm{k}}}
\newcommand{\boldn}{{\bm{n}}}
\newcommand{\boldq}{{\bm{q}}}
\newcommand{\boldu}{{\bm{u}}}
\newcommand{\boldv}{{\bm{v}}}
\newcommand{\boldx}{{\bm{x}}}
\newcommand{\boldtheta}{{\bm{\theta}}}
\newcommand{\boldzero}{{\bm{0}}}
\newcommand{\calE}{{\mathcal{E}}}
\newcommand{\calG}{{\mathcal{G}}}
\newcommand{\calV}{{\mathcal{V}}}
\newcommand{\fund}{{\mathrm{f}}}
\newcommand{\sfE}{{\mathsf{E}}}
\newcommand{\sfG}{{\mathsf{G}}}
\newcommand{\sfV}{{\mathsf{V}}}
\DeclareMathOperator{\image}{im}
\newcommand{\iop}{{\mathrm{i}}}
\newcommand{\eop}{{\mathrm{e}}}
\DeclareMathOperator{\Leb}{Leb}
\DeclareMathOperator{\spectrum}{spec}
\newtheorem{theorem}{Theorem}[section]
\newtheorem{lemma}[theorem]{Lemma}
\theoremstyle{definition}
\newtheorem{assumption}[theorem]{Assumption}
\numberwithin{equation}{section}
\title[Measure of Periodic Spectra]{Measure of the Spectra of Periodic Graph Operators in the Large-Coupling Limit}
\author[J. Fillman]{Jake Fillman}
\date{}
\begin{document}

\begin{abstract}
    We derive a sharp criterion on the spectra of periodic discrete Schr\"odinger operators acting on connected periodic lattices: the measure of the spectrum goes to zero as the coupling constant goes to infinity if and only if there is no infinite connected path of degeneracies.
\end{abstract}

\maketitle

\hypersetup{
	linkcolor={black!30!blue},
	citecolor={red},
	urlcolor={black!30!green}
}

\section{Introduction}
Periodic Schr\"odinger operators on lattices have been studied extensively over the years as models of crystals and photonic materials;
we direct the reader to
 \cite{KorotyaevSaburova2014JMAA, Kuchment2016BAMS} for background and additional reading.
Considering the operators in question as quantum mechanical Hamiltonians, the spectrum corresponds with the allowable energies of the system and hence plays an important role in physics as well as in spectral theory.
It is well-known that the spectrum can be determined from the Floquet--Bloch theory and consists of finitely many nondegenerate closed intervals.

Motivated by recent works \cite{A-RDFS2025AHP,A-RFFL} that investigated such operators in the large-coupling limit for \emph{non-degenerate} potentials, the present work investigates the structure of the spectrum in the large-coupling limit for \emph{arbitrary} potential energy.
Our main result is a \emph{sharp topological} criterion: the measure of the spectrum goes to zero in the large-coupling limit if and only if one cannot travel to infinity along a connected equipotential path (i.e., a connected path on which the potential energy is constant).
Later in the work we will give an equivalent formulation in terms of the graph cohomology of the quotient graph.
Furthermore, one readily sees from the proof that the result can be formulated locally in energy: for any point \(a\) in the range of the potential energy function $Q$, there are spectral bands of the operator $\Delta + \mu Q$ ``near'' \(\mu a\) and the lengths of those bands tend to zero if and only if there is not a connected path to infinity along the set where the potential energy takes the value $a$.

Let us make this more precise.
Given a $\bbZ^d$-periodic graph\footnote{See Section~\ref{subsec:floquet} for a detailed definition.} $\calG = (\calV,\calE)$, we consider a periodic Schr\"odinger operator of the form
\begin{equation}
   H_Q:\ell^2(\calV) \to \ell^2(\calV) \qquad [H_Q \psi](\boldv) = Q(\boldv)\psi(\boldv) + \sum_{\boldu \sim \boldv} \psi(\boldu).
\end{equation}
We say that a subgraph \(\calG'  \subseteq \calG\) is a $Q$-\emph{flat path} if \(\calG'\) is connected   and \(Q\) is \emph{constant} on (the vertices of) \( \calG'\).

\begin{theorem} \label{t:perMeasGen}
Assume \(Q : \calV \to \bbR\) is a periodic potential on the $\bbZ^d$-periodic graph $\calG= (\calV, \calE)$. Then
\begin{equation} \label{eq:measureToZeroGen}
    \lim_{\mu \to \infty} \Leb \spectrum H_{\mu Q} = 0
\end{equation}
if and only if \(\calG\) does not contain any $Q$-flat paths containing infinitely many points.
\end{theorem}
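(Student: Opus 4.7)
The plan is to reduce the problem, via the Floquet decomposition and degenerate perturbation theory, to the spectra of the adjacency operators on the equipotential subgraphs of $\calG$. Fix a fundamental domain $\sfV$, let $a_1 < \cdots < a_K$ denote the distinct values assumed by $Q$ on $\sfV$, and set $\sfV_j = \{v \in \sfV : Q(v) = a_j\}$ and $\calV_j = Q^{-1}(a_j) \subseteq \calV$. Let $P_j$ denote the orthogonal projection of $\bbC^{\sfV}$ onto $\bbC^{\sfV_j}$. Writing $H_{\mu Q}$ as the direct integral over $\bbT^d$ of the matrices $H_{\mu Q}(\boldtheta) = \mu Q + \Delta(\boldtheta)$, one has $\spectrum H_{\mu Q} = \bigcup_{\boldtheta \in \bbT^d} \spectrum H_{\mu Q}(\boldtheta)$.

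First I would isolate the spectral clusters. Since $\|\Delta(\boldtheta)\|$ is bounded uniformly in $\boldtheta$, for $\mu$ large enough Weyl's inequality partitions $\spectrum H_{\mu Q}(\boldtheta)$ into $K$ disjoint groups, the $j$th group consisting of exactly $|\sfV_j|$ eigenvalues in an $O(1)$-neighborhood of $\mu a_j$. Applying regular perturbation theory (for instance via a contour-integral representation of the cluster projector of the analytic family $\mu^{-1} H_{\mu Q}(\boldtheta) = Q + \mu^{-1}\Delta(\boldtheta)$) then yields the uniform-in-$\boldtheta$ expansion
\begin{equation} \label{eq:clusterExp}
E_{j,k}(\mu, \boldtheta) = \mu a_j + \lambda_{j,k}(\boldtheta) + O(\mu^{-1}),
\end{equation}
where, ordered non-decreasingly, $\{\lambda_{j,k}(\boldtheta)\}_{k=1}^{|\sfV_j|}$ are the eigenvalues of the compressed matrix $P_j \Delta(\boldtheta) P_j$.

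Next I would recognize $P_j \Delta(\boldtheta) P_j$ as the Floquet matrix at quasimomentum $\boldtheta$ of the adjacency operator $\Delta_j$ on the induced subgraph $\calG[\calV_j]$, which is itself $\bbZ^d$-periodic with fundamental domain $\sfV_j$. Consequently $\bigcup_{\boldtheta}\bigcup_k \{\lambda_{j,k}(\boldtheta)\} = \spectrum \Delta_j$, and since the $K$ clusters become pairwise disjoint for $\mu$ large, \eqref{eq:clusterExp} together with uniform continuity on the compact set $\bbT^d$ yields
\begin{equation} \label{eq:limitMeasure}
\lim_{\mu \to \infty} \Leb \spectrum H_{\mu Q} = \sum_{j=1}^{K} \Leb \spectrum \Delta_j.
\end{equation}

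Finally I would characterize when each summand vanishes. If every connected component of $\calG[\calV_j]$ is finite, then by $\bbZ^d$-periodicity $\Delta_j$ decomposes as a direct sum of countably many copies of finitely many finite matrices, so $\spectrum \Delta_j$ is a finite set and has Lebesgue measure zero. Conversely, an infinite connected component $C \subseteq \calG[\calV_j]$ must have a translation stabilizer of positive rank in $\bbZ^d$ (otherwise $C$ would meet each $\bbZ^d$-orbit at most once and hence be finite), so $C$ is a connected infinite periodic graph in its own right; a standard Floquet-theoretic fact then gives $\Leb \spectrum \Delta_j > 0$. Since an infinite $Q$-flat path exists if and only if some $\calG[\calV_j]$ has an infinite connected component, the theorem follows from \eqref{eq:limitMeasure}. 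I expect the main technical hurdles to be the uniform expansion \eqref{eq:clusterExp} (standard but requiring care because eigenvalues inside a cluster may cross as $\boldtheta$ varies) and the ``standard fact'' that a connected infinite periodic graph has spectrum of positive Lebesgue measure; I would isolate each as an independent lemma.
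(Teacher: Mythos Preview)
Your argument is correct and follows the same overall architecture as the paper: Floquet decomposition, degenerate perturbation theory to reduce to the compressed matrices $P_j\Delta(\boldtheta)P_j$, identification of these as Floquet matrices of the adjacency operator $\Delta_j$ on the equipotential subgraph $\calG[\calV_j]$, and finally an appeal to the fact that a connected infinite periodic graph cannot have all bands flat. Your limit identity $\lim_{\mu\to\infty}\Leb\spectrum H_{\mu Q}=\sum_j\Leb\spectrum\Delta_j$ is a clean packaging of the conclusion that the paper does not state explicitly.

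The one genuine difference is in the ``no infinite flat path'' direction. You argue directly: if every component of $\calG[\calV_j]$ is finite, then $\Delta_j$ is a direct sum of (finitely many isomorphism classes of) finite matrices, so $\spectrum\Delta_j$ is a finite set. The paper instead develops a graph-cohomological lemma (Lemma~\ref{lem:topologicalChar}) showing that the absence of an infinite path forces the $1$-cochain $\psi_{\boldtheta}$ to be exact on each level set, which produces an explicit diagonal gauge $U$ with $U^*\Delta(\boldtheta)|_{\sfV_\fund^a}U=\Delta(\boldzero)|_{\sfV_\fund^a}$. Your route is shorter and more elementary; the paper's route gives additional structure (the explicit gauge and the equivalence with homotopical nontriviality of loops in $\bbT^d$) that may be of independent interest. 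For the converse direction, both arguments invoke the same external input (positive-measure spectrum for the Laplacian on a connected infinite periodic graph), and your orbit-counting argument that an infinite component has positive-rank stabilizer is exactly what is needed to feed into that fact.
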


Equivalently, the measure of the spectrum goes to zero in the large-coupling limit if and only if there is no point $a$ in the range of $Q$ such that $Q^{-1}\{a\}$ contains an unbounded connected component.  
This is naturally dual to works on the discrete \emph{Bethe--Sommerfeld conjecture} \cite{EmbFil2019JST, FilHan2020JdAM, HanJit2018CMP}, which considers the case in which the potential energy is small compared to the kinetic term, that is, the limit $\mu \to 0$. Estimates on the measure of the spectra of periodic operators are important and have been studied frequently over the years; in addition to the references already discussed, we mention \cite{GolKut2019GIGMA, KorotyaevSaburova2014JMAA, 
KorotyaevSaburova2015PAMS,
KorotyaevSaburova2020MathAnn, KorotyaevSaburova2022JMAA, KorotyaevSaburova2022CPAA}.
In fact, let us point out that a special case of Theorem~\ref{t:perMeasGen} can be derived from  \cite[Theorem~2.2]{KorotyaevSaburova2014JMAA}, which handles the \emph{generic} situation in which $Q$ is injective on the quotient graph.
Thus, the main achievement of the present work is to handle the case of arbitrary degenerate potentials.

Discrete Schr\"odinger operators on $\ell^2(\bbZ^d)$ have been studied quite extensively over the years: we direct the reader to the textbooks \cite{AizenmanWarzel2015GSM, CFKS1987, DF2022ESO1, LukicBook} for background information and further reading.
In recent years, there has been significant interest in the setting of periodic operators, as these are an exciting test case for photonic materials and metamaterials \cite{Kuchment2001, Joannopoulos2008}.
Such operators exhibit many interesting features, including flat bands \cite{FaustKachkovskiyPreprint, SabriYoussef2023JMP},
dispersive transport \cite{DFY2025disperse, MiZhao2020JMAA, MiZhao2022PAMS, SKW2026JMAA},
ballistic motion \cite{AVWW2011JMP, AschKnauf1998Nonlin, DLY2015CMP, Fillman2021OTAA, BoutetSabri2023OTAA},
and suppression of transport \cite{A-RCSW, A-RFFL,A-RDFS2025AHP}.
Recently, interdisciplinary approaches combining spectral theory with techniques from algebra and combinatorics have led to further results  \cite{
FLM2024JFA, 
FisLiShi2021CMP, LiShipman2020LMP, Liu2022GAFA, Shipman2020JST, ShipmanSottilepreprint}.
\smallskip

We review a few preliminaries regarding Floquet theory and topology of graphs in Section~\ref{sec:prelim} and then give the proof of Theorem~\ref{t:perMeasGen} in Section~\ref{sec:proofs}.

\subsection*{Acknowledgements}
The author is grateful to Mark Embree for helpful discussions and insights and to the Banff International Research Station for hospitality during a recent workshop, at which portions of this work were initiated and completed.
The author was supported in part by National Science Foundation Grant DMS 2513006 and Simons Foundation Grant MPS-TSM 00013720.

\section{Preliminaries} \label{sec:prelim}

\subsection{Floquet Theory} \label{subsec:floquet}
Let us review a few basic notions from Floquet theory; this is standard and can be found in a few places, e.g., \cite{FLM2024JFA, KorotyaevSaburova2014JMAA, ShipmanSottilepreprint}.

As usual, a \emph{graph} \(\calG = (\calV, \calE)\) comes with two pieces of data, a set \(\calV\) of \emph{vertices} and a set \(\calE \subseteq \calV \times \calV\) of \emph{edges}, which are \emph{ordered} pairs of vertices.
We write \(\boldu \sim \boldv\) if \((\boldu, \boldv) \in \calE\), and assume that $\calE$ is symmetric in the sense that $\boldu \sim \boldv$ if and only if $\boldv \sim \boldu$.
We also assume that there are no self-loops, that is, \( (\boldu , \boldu) \notin \calE\)  for every \(\boldu\).

A $\bbZ^d$-periodic graph is a locally finite graph $\calG = (\calE,\calV)$ equipped with a suitable action by $\bbZ^d$; it is convenient to view the vertices of the graph as points in $\bbR^d$, so we write the action of $\bbZ^d$ additively and identify the edge $(\boldu,\boldv)$ with the line segment connecting $\boldu$ to $\boldv$.
We assume that the translation action is
\begin{itemize}
    \item \emph{free}: $\boldu + \boldn \neq \boldu$ for all $\boldu \in \calV$ and $\boldn \neq \boldzero$  
    \item \emph{cofinite:} there are finitely many orbit classes of vertices and edges
\end{itemize}
Notice that if $\Gamma$ is any full-rank subgroup of $\bbZ^d$ with linearly independent generators $\{\bm{\gamma}_1,\ldots,\bm{\gamma}_d\}$, then  one can view $\calG = (\calV, \calE)$ as a $\bbZ^d$-periodic graph with respect to the rescaled $\bbZ^d$ action given by
\begin{equation}
    \boldu \dot{+}\bm{n} = \boldu + n_1\bm{\gamma}_1 + \cdots + n_d \bm{\gamma}_d.
\end{equation} In particular, by passing to a suitable subgroup, we can (and do) assume without loss of generality the following:
\begin{assumption}\mbox{\,}\label{assumption:main}
\begin{itemize}
    \item For each $\boldu \in \calV$, there is no $\boldn \in \bbZ^d$ such that $\boldu \sim  \boldu + \boldn$.
    \item For each $\boldu, \boldv \in \calV$, there is at most one $\boldn \in \bbZ^d$ such that $\boldu \sim \boldv + \boldn$.
\end{itemize}
These assumptions are not essential, but they simplify a few aspects of the analysis, as they ensure that the quotient graph $\calG/\bbZ^d$ has no self-loops and no pairs of vertices with multiple edges.
\end{assumption}

A potential \(Q : \calV \to \bbR\) is said to be \emph{periodic} if there is a full-rank lattice $\Gamma \subseteq \bbZ^d$ such that $Q(\boldu+\bm{\gamma})\equiv Q(\boldu)$ for all $\bm{\gamma} \in \Gamma$;  rescaling the group action as before, we are free to assume
\begin{equation} \label{eq:pjPerDef}
    Q(\boldu + \boldn) = Q(\boldu) \quad \text{for all } \boldn \in \bbZ^d, \ \boldu \in \calV.
\end{equation}

In this setting, there is a canonical direct integral decomposition of \(H\) that can be used to study the spectrum and spectral properties of \(H\).
Here we mostly follow the notation and conventions of \cite{KorotyaevSaburova2014JMAA}, to which the reader may refer for proofs and further discussion.

Let us describe the quotient graph $ \calG/\bbZ^d =: \sfG_\fund =(\sfV_\fund, \sfE_\fund) $ in detail. Put
\[\sfV_\fund = [0,1)^d \cap \calV \cong \calV / \Gamma,\]
and let $\nu = \#\sfV_\fund$ denote the number of vertices in the fundamental cell.
Thus, every vertex $\boldu \in \calV$ can uniquely be written as $\boldu = \boldv + \boldn $ for some $\boldv \in \sfV_\fund$ and $\boldn \in \bbZ^d$; define
\begin{equation}
    [\boldu]=\boldn, \quad \widetilde{\boldu} = \boldv.
\end{equation}
For an edge $(\boldu, \boldv) \in \calE$, we write $\tau(\boldu, \boldv) = [\boldv] - [\boldu]$ for the \emph{edge index}.
In the quotient graph, two vertices $\boldu , \boldv \in \sfV_\fund$ are joined by an edge in $\sfE_\fund$ if and only if $\boldu \sim \boldv + \boldn$ for some $\boldn$ (which is unique by Assumption~\ref{assumption:main}); we define $\sigma(\boldu,\boldv) = \tau(\boldu, \boldv+\boldn) = \boldn$ in this case.

For \( \boldtheta \in \bbT^d = \bbR^d/ \bbZ^d\), the Floquet matrix \(H(\boldtheta) \in \bbC^{\sfV_\fund \times \sfV_\fund}\) is given by  
\begin{equation}
    \langle \bolde_\boldu, H(\boldtheta) \bolde_\boldv \rangle
    = \begin{cases}
        Q(\boldu) & \boldu = \boldv \\
        \eop^{2\pi \iop \langle \sigma(\boldu, \boldv), \boldtheta \rangle} & \boldu \sim \boldv \\
        0 & \text{otherwise,}
    \end{cases}
\end{equation}
where as usual $\{\bolde_\boldv : \boldv \in \sfV_\fund\}$ denotes the standard basis of $\bbC^{\sfV_\fund}$. 
Equivalently, $H(\boldtheta)$ can be interpreted as the restriction of \(H\) to \(\sfV_\fund\) with the boundary conditions
\begin{equation}
\psi(\boldu+ \boldn) = \eop^{2\pi \iop \langle \boldn, \boldtheta \rangle} \psi(\boldu),
\quad \boldn  \in \bbZ^d.
\end{equation}

Note that $\tau$ and $\sigma$ are antisymmetric, so $H(\boldtheta)$ is self-adjoint and therefore has \( \nu \) real eigenvalues, denoted with multiplicity by
\begin{equation}
    \lambda_1(\boldtheta) \leq \cdots \leq \lambda_\nu(\boldtheta).
\end{equation}
The \(j\)th band of \(H\) is the range of the function \(\lambda_j(\cdot)\) and the spectrum of \(H\) is the union of the bands:
\begin{equation}
I_j:= \{\lambda_j(\boldtheta) : \boldtheta \in \bbT^d\}, \quad     \spectrum H = \bigcup_{j=1}^\nu I_j.
\end{equation}

We furthermore view \(\sfG_\fund\) (its vertices and edges) as a topological object: a subset of the torus
\begin{equation}
    \bbT^d := \bbR^d /  \bbZ^d.
\end{equation}
In particular, if $\boldu\sim \boldv$ in $\sfG_\fund$, it means that $\boldu \sim \boldv + \sigma(\boldu,\boldv)$ in $\calG$, so we view the edge $(\boldu,\boldv)$ in $\sfE_\fund$ as the projection of the line segment $(\boldu, \boldv + \sigma(\boldu,\boldv)$ to $\bbT^d$.

Here and throughout the paper, we use a few standard notions from algebraic topology; the reader may consult any standard reference (e.g.\ Hatcher \cite{Hatcher2002algebraicTopology}) for the necessary background.
By elementary algebraic topology, it is known that the fundamental group of \(\bbT^d\) is
\begin{equation}
    \pi_1(\bbT^d) \cong \bbZ^d
\end{equation}
with the canonical projection \(\Pi: \bbR^d \to \bbT^d\) furnishing the universal cover.
For our purposes, it is convenient to realize this equivalence in the following manner.
Fix a base point \({\boldx}_0 \in \bbT^d\). 
Given a continuous path \(\gamma:[0,1] \to \bbT^d\) with \(\gamma(0) = \gamma(1) = {\boldx}_0\), \(\gamma\) has a lift \(\widetilde\gamma:[0,1] \to \bbR^d\) satisfying \(\Pi \circ \widetilde \gamma = \gamma\).
The map sending \(\gamma\) to \(\widetilde\gamma(1) - \widetilde \gamma(0)\) is well-defined modulo homotopy of loops and furnishes an isomorphism \(\pi_1(\bbT^d) \to \bbZ^d\).

\subsection{Graph Cohomology} \label{subsec:graphCohomology}

It will be useful to formulate a few topological and graph-theoretic notions to facilitate the discussion.

Let us briefly reprise the standard constructions of cohomology groups of a \emph{finite} graph, $ \sfG = ( \sfV,  \sfE)$ (we use a different font to distinguish these from the infinite graphs considered elsewhere in the manuscript).
The main graph we have in mind is a subgraph of \(\sfG_\fund\) on which the potential \(Q\) is constant, so among other things this graph is not necessarily connected. As before, we assume our graphs are symmetric: $\boldu \sim \boldv \iff \boldv \sim \boldu$.

Let  \(\Omega^0(\sfG) = \bbR^\sfV \) and let \(\Omega^1(\sfG) \subseteq \bbR^\sfE\) be the set of \emph{antisymmetric} functions, that is, those $\psi:\sfE \to \bbR$ such that $\psi(\boldu, \boldv) = - \psi(\boldv, \boldu)$ for all $(\boldu , \boldv)\in \sfE$.
Evidently, $\dim \Omega^1$ is $\#\sfE/2$, the number of ``unoriented'' edges in $\sfG$.
Define the operator \(\nabla: \Omega^0(\sfG) \to \Omega^1(\sfG)\)  by
\begin{equation}
    [\nabla \varphi](\boldu, \boldv) = \varphi(\boldv) - \varphi(\boldu).
\end{equation}
It is not hard to check that \(H^0(\sfG) := \ker \nabla \cong \bbR^{\beta_0}\), where \( \beta_0 \) is the number of connected components of \(\sfG\).
By a dimension-counting argument, one has \(H^1(\sfG) := \Omega^1(\sfG)/\image \nabla \cong \bbR^{\beta_1}\), where 
\[\beta_1 = \dim \Omega^1(\sfG)  - \dim\Omega^0(\sfG) + \beta_0 \]
 is the first Betti number of the graph, which amounts to the number of ``independent loops'' in the graph.
 In particular \(\beta_1 = 0\) if and only if \(\sfG\) is a union of trees.

The following lemma is well-known; we include the proof for the reader's convenience.
Here, we define a \emph{vertex loop} to be a finite sequence $(\boldv_0,\ldots, \boldv_\ell)$ of points of $\sfV$ such that $\boldv_0 = \boldv_\ell$ and $\boldv_{j-1}\sim \boldv_j$ for all $j$.
\begin{lemma}\label{lem:closedLoopH1}  
Given $\psi\in \Omega^1(\sfG)$, the following are equivalent:
\begin{enumerate}
\item[\rm{(a)}] The cohomology class of $\psi$ is trivial in $H^1(\sfG)$.
\item[{\rm(b)}] For any   vertex loop  $(\boldv_0, \boldv_1,\ldots, \boldv_\ell)$ in $\sfG$, one has
\begin{equation} \label{eq:clsedLoopSum}
    \sum_{j=1}^\ell  \psi(\boldv_{j-1} , \boldv_j) = 0.
\end{equation}
\end{enumerate} 
\end{lemma}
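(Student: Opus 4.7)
The argument is the standard one showing that a closed $1$-cochain on a graph is exact iff it has trivial periods, adapted to our antisymmetric setup. I would prove the two implications separately.

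For (a) $\Rightarrow$ (b), suppose $\psi = \nabla \varphi$ for some $\varphi \in \Omega^0(\sfG)$. Then for any vertex loop $(\boldv_0,\ldots,\boldv_\ell)$ with $\boldv_0 = \boldv_\ell$, the sum in \eqref{eq:clsedLoopSum} telescopes:
\begin{equation*}
    \sum_{j=1}^\ell \psi(\boldv_{j-1},\boldv_j) = \sum_{j=1}^\ell \bigl(\varphi(\boldv_j) - \varphi(\boldv_{j-1})\bigr) = \varphi(\boldv_\ell) - \varphi(\boldv_0) = 0.
\end{equation*}

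For (b) $\Rightarrow$ (a), I will construct $\varphi$ explicitly. Decompose $\sfG$ into its connected components. In each component, fix a base vertex $\boldv_\star$ and declare $\varphi(\boldv_\star) = 0$. For any other vertex $\boldv$ in the same component, choose a vertex path $(\boldv_\star = \boldu_0, \boldu_1,\ldots,\boldu_k = \boldv)$ in $\sfG$ and set
\begin{equation*}
    \varphi(\boldv) = \sum_{j=1}^k \psi(\boldu_{j-1},\boldu_j).
\end{equation*}
The main thing to check, and the only non-trivial step, is that $\varphi(\boldv)$ does not depend on the choice of path. Given two paths from $\boldv_\star$ to $\boldv$, concatenating the first with the reverse of the second yields a vertex loop based at $\boldv_\star$; using the antisymmetry of $\psi$ on the reversed piece together with hypothesis (b) shows the two path-sums agree. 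Once $\varphi$ is well-defined, for any edge $(\boldu,\boldv) \in \sfE$ one appends $\boldv$ to a path realizing $\varphi(\boldu)$ to see that $\varphi(\boldv) - \varphi(\boldu) = \psi(\boldu,\boldv)$, i.e., $\nabla\varphi = \psi$.

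\textbf{Main obstacle.} The only real subtlety is the well-definedness of $\varphi$, and this is precisely where hypothesis (b) is used. Everything else is bookkeeping: the choice of base vertex in each connected component is arbitrary, the construction works component-by-component so the disconnectedness of $\sfG$ (which is expected since $\sfG$ will be a level set of $Q$ in the application) causes no difficulty, and antisymmetry of $\psi$ guarantees the reversed-path contribution changes sign correctly.
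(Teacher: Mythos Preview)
Your proof is correct and follows essentially the same approach as the paper's: telescoping for (a)$\Rightarrow$(b), and for (b)$\Rightarrow$(a) fixing a base vertex in each connected component and defining $\varphi$ by path-sums, with the loop hypothesis ensuring well-definedness. Your version is simply more explicit than the paper's terse sketch.
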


\begin{proof}
If $\psi = \nabla\varphi$, then \eqref{eq:clsedLoopSum} holds for every closed loop by a computation.
\medskip

Conversely, assume \eqref{eq:clsedLoopSum} holds for all closed loops. Choose one vertex from each connected component, call those vertices $\boldv_1,\ldots,\boldv_{\beta_0}$, and define $\varphi(\boldv_j) = 0$ for each $j$.
Thereafter define $\varphi$ inductively so that $\psi = \nabla \varphi$ holds.
The condition \eqref{eq:clsedLoopSum} holding for all closed loops ensures that $\varphi$ is well-defined.
\end{proof}

Any \(\sfV_0 \subseteq \sfV\), can be given the structure of a graph by including all edges in \(\sfE\) for which both vertices lie in \(\sfV_0\), and we write \(\sfG_0 = (\sfV_0, \sfE_0)\) for the induced subgraph.\footnote{Strictly speaking, $\sfE_0$ and $\sfG_0$ depend on the choice of $\sfV_0$ and hence properly should be written as  $\sfE_0(\sfV_0)$ and $\sfG_0(\sfV_0)$, but we suppress the dependence in the notation.}
There are then natural projections 
\[
\Omega^k(\sfG) \to \Omega^k(\sfG_0): \psi \mapsto \psi^{\sfV_0}, \quad k =0,1,
\]
which are simply given by restricting to \(\sfG_0\).

\section{Proof of Main Result}
\label{sec:proofs}

Let us now fix a $\bbZ^d$-periodic graph $\calG = (\calV, \calE)$ and periodic potential $Q:\calV \to \bbR$ and consider (subgraphs of)
\begin{equation}
    \sfG_\fund = (\sfV_\fund,\sfE_\fund) := \calG/\bbZ^d
\end{equation}
in the framework of Section~\ref{subsec:graphCohomology}.

The Floquet matrix \(\Delta(\boldtheta)\) of the free Laplacian on $\calG$ induces a 1-chain \(\psi_\boldtheta \in \Omega^1(\sfG_\fund)\)  by 
\begin{equation}
 \psi_\boldtheta(\boldu, \boldv) =
 \langle \sigma(\boldu, \boldv),\boldtheta \rangle
 \end{equation}
so that in particular:
\begin{equation}
    \langle \bolde_\boldu, \Delta(\boldtheta) \bolde_\boldv \rangle
    = \eop^{2\pi \iop  \psi_\theta(\boldu, \boldv)}
\end{equation}
for $\boldu \sim \boldv$.

The main topological input is that for a nonempty subset \(\sfV_0 \subseteq \sfV_\fund\), the chains obtained by restricting \(\psi_\boldtheta\) are nontrivial in \(H^1(\sfG_0)\) if and only \(\sfG_0\) (as a subset of \(\bbT^d \))  contains a closed vertex loop that is  nontrivial in the homotopy group \(\pi_1(\bbT^d)\). 
This is a slight abuse of terminology: a vertex loop is by definition a finite sequence of vertices $(\boldv_0,\ldots,\boldv_\ell)$ with $\boldv_{j-1} \sim \boldv_j$ for each $j$ and $\boldv_0=\boldv_\ell$; the corresponding element of the homotopy group is the obvious one.

\begin{lemma} \label{lem:topologicalChar}
Consider a nonempty set \(\sfV_0\subseteq \sfV_\fund\) and its associated graph \(\sfG_0 = (\sfV_0, \sfE_0)\).
    The following are equivalent.
\begin{enumerate}[label={\rm(\alph*)}]
    \item 
    \label{item:nontrivInGraph}
    \(\psi_{\boldtheta}^{\sfV_0}\) is nontrivial in \(H^1(\sfG_0)\) for some \(\boldtheta\).
    
    \item 
    \label{item:infinitePath}
    The lift of \(\sfG_0\) from \(\bbT^d\) to \(\bbR^d\) {\rm(}i.e., \(\Pi^{-1}(\sfG_0)\){\rm)} contains a vertex path having infinitely many distinct vertices.

    \item   
    \label{item:nontrivInTorus}
    \( \sfG_0 \) contains a vertex loop that is nontrivial in \( \pi_1(\bbT^d)\).
\end{enumerate}\end{lemma}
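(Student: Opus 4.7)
The plan is to prove the three-way equivalence by showing (a)$\iff$(c) via Lemma~\ref{lem:closedLoopH1}, and (b)$\iff$(c) by a direct topological argument about lifts to the universal cover. The main bookkeeping device throughout is the identification, for a vertex loop $\gamma=(\boldv_0,\boldv_1,\ldots,\boldv_\ell = \boldv_0)$ in $\sfG_0$, of its class in $\pi_1(\bbT^d)\cong \bbZ^d$ with the integer vector
\begin{equation*}
S_\gamma \;:=\; \sum_{j=1}^\ell \sigma(\boldv_{j-1},\boldv_j)\in\bbZ^d.
\end{equation*}
This identification is immediate from the fact that the edge $(\boldu,\boldv)$ of $\sfG_\fund$ lifts to the line segment from $\boldu\in\sfV_\fund$ to $\boldv+\sigma(\boldu,\boldv)\in\calV$, so that concatenating the chosen edge lifts along $\gamma$ produces a path in $\bbR^d$ from $\boldv_0$ to $\boldv_0+S_\gamma$; this in turn matches the isomorphism $\pi_1(\bbT^d)\to\bbZ^d$ described at the end of Section~\ref{subsec:floquet}.

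For (a)$\iff$(c), I will compute directly. For any vertex loop $\gamma$ in $\sfG_0$,
\begin{equation*}
\sum_{j=1}^\ell \psi_\boldtheta(\boldv_{j-1},\boldv_j) \;=\; \langle S_\gamma,\boldtheta\rangle.
\end{equation*}
By Lemma~\ref{lem:closedLoopH1}, $\psi_\boldtheta^{\sfV_0}$ is trivial in $H^1(\sfG_0)$ for every $\boldtheta\in\bbT^d$ if and only if $\langle S_\gamma,\boldtheta\rangle = 0$ for every loop $\gamma$ and every $\boldtheta$, which (since $\boldtheta$ ranges over a set containing $d$ linearly independent vectors) is equivalent to $S_\gamma=0$ for every loop $\gamma$. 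This is precisely the negation of (c).

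For (c)$\Rightarrow$(b), I take a loop $\gamma$ with $S_\gamma\neq 0$ and iterate its lift: concatenating $k$ copies of the lifted loop in $\bbR^d$ produces a vertex path in $\Pi^{-1}(\sfG_0)$ reaching $\boldv_0+kS_\gamma$ for every $k\in\bbN$, yielding infinitely many distinct vertices. For (b)$\Rightarrow$(c), suppose $(\widetilde{\boldu}_k)_{k\geq 0}$ is a vertex path in $\Pi^{-1}(\sfG_0)$ with infinitely many distinct vertices. Since $\sfV_0$ is finite, if every coincidence $\Pi(\widetilde{\boldu}_i)=\Pi(\widetilde{\boldu}_j)$ forced $\widetilde{\boldu}_i=\widetilde{\boldu}_j$, the distinct vertices of the path would inject into $\sfV_0$, a contradiction. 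So there exist $i<j$ with $\Pi(\widetilde{\boldu}_i)=\Pi(\widetilde{\boldu}_j)$ but $\widetilde{\boldu}_j-\widetilde{\boldu}_i=:\boldm \in\bbZ^d\setminus\{\boldzero\}$. Projecting the subpath $\widetilde{\boldu}_i,\ldots,\widetilde{\boldu}_j$ to $\bbT^d$ gives a vertex loop in $\sfG_0$ whose class in $\pi_1(\bbT^d)\cong\bbZ^d$ is $\boldm\neq \boldzero$, establishing (c). The main obstacle is mostly conceptual rather than technical: one must keep straight which objects live upstairs in $\bbR^d$ versus downstairs in $\bbT^d$, and verify that the combinatorial quantity $S_\gamma$ genuinely agrees with the topological homotopy class — after which every implication reduces to a short bookkeeping step.
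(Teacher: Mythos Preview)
Your proof is correct and follows essentially the same approach as the paper's: both hinge on the identification of the loop sum $S_\gamma = \sum_j \sigma(\boldv_{j-1},\boldv_j)$ with the homotopy class in $\pi_1(\bbT^d)\cong\bbZ^d$, together with Lemma~\ref{lem:closedLoopH1} and the lifting/pigeonhole argument. The only difference is organizational---you prove (a)$\iff$(c) and (b)$\iff$(c) directly, while the paper runs the cycle (a)$\Rightarrow$(b)$\Rightarrow$(c)$\Rightarrow$(a)---but the ingredients and the substance of each step are the same.
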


\begin{proof}
\underline{\ref{item:nontrivInGraph}\(\implies\)\ref{item:infinitePath}}.
Assume \(\psi_\boldtheta^{\sfV_0}\) is nontrivial in \(H^1(\sfG_0)\).
According to Lemma~\ref{lem:closedLoopH1} there is a closed loop \((\boldu_0,\ldots, \boldu_\ell)\) in \(\sfV_0\)
such that \(\boldu_0 = \boldu_\ell\), and
\begin{equation}
    \sum_{r=1}^{\ell} \psi_\boldtheta(\boldu_{r-1}, \boldu_r)  \neq 0.
\end{equation}
Denote
\begin{equation} \label{eq:torusLoopIncrements}
\bm{k}
:= 
\sum_{r=1}^{\ell} \sigma(\boldu_{r-1}, \boldu_r) ,
\end{equation}
which immediately gives us
\begin{equation} \label{eq:psiTildeLoopSum}
    \sum_{r=1}^{\ell} \psi_\boldtheta(\boldu_{r-1}, \boldu_r)  
    = \langle \boldk, \boldtheta\rangle.
\end{equation}
In particular $\langle \boldk, \boldtheta \rangle \neq 0$, whence $\boldk\neq \boldzero$.

Without loss, we further assume that the loop is irreducible in the sense that \( \boldu_r \neq  \boldu_0\) for all \(r \neq 0,\ell\).
Lifting this loop through \(\Pi\) to \(\bbR^d\) produces a vertex path from \(\boldu_0\)  to \(\boldu_0 + \boldk\) through the preimage of \(\sfG_0\).
By periodicity, we produce a vertex path from \(\boldu_0\) to \(\boldu_0+s\boldk\) for every \(s \in \bbZ\), establishing \ref{item:infinitePath}.
\medskip

\underline{\ref{item:infinitePath}\(\implies\)\ref{item:nontrivInTorus}}.
If the lift of \(\sfG_0\) to \(\bbR^d\) contains an infinite connected vertex path, there in particular exists a connected vertex path from some \(\boldu\) to \(\boldu+\boldk\) with \(\boldk \in \bbZ^d \setminus\{{0}\}\).
The projection of this path to \(\bbT^d\) produces a vertex path that is nontrivial in \(\pi_1(\bbT^d)\) on account of the  correspondence principle for covering spaces.
\medskip

\underline{\ref{item:nontrivInTorus}\(\implies\)\ref{item:nontrivInGraph}}.
Suppose \(\sfG_0\)  contains a homotopically nontrivial vertex loop \(\gamma\), let \(\widetilde \gamma\) denote a lift of \(\gamma\) through \(\Pi\), and put
 \(\boldk = \widetilde\gamma(1) - \widetilde\gamma(0)\). 
 It follows that there is a closed loop in \(\sfG_0\) satisfying \eqref{eq:torusLoopIncrements}, so the desired \(\boldtheta\) is then any for which \(\langle \boldk, \boldtheta \rangle \neq 0\), which can be seen directly from \eqref{eq:psiTildeLoopSum}.
\end{proof}

We now have what we need to prove the main result.

\begin{proof}[Proof of Theorem~\ref{t:perMeasGen}]
Let \(Q:\calV \to \bbR\) be periodic. As discussed before, we can replace the $\bbZ^d$ action by the action by a full-rank subgroup and ensure that \eqref{eq:pjPerDef} and Assumption~\ref{assumption:main} hold.
\medskip

\noindent 
\textbf{Case 1: \boldmath \(Q\) has no infinite flat paths.} 
Consider a value \(a\) in the range of \(Q\),  let
    \(\sfV_\fund^a \subseteq \sfV_\fund\) be given by
    \begin{equation}
        \sfV_\fund^a = \{\boldu \in \sfV_\fund : Q(\boldu) = a\},
    \end{equation}
    and write $\sfG^a_\fund$ for the corresponding graph.
    Since \(Q\) has no infinite chains, Lemma~\ref{lem:topologicalChar} implies that \(\psi_\boldtheta^{\sfV_\fund^a}\) is trivial in \(H^1(\sfG_\fund^a)\) for all \(\boldtheta\), allowing us to write
    \begin{equation}
        \psi_\boldtheta^{\sfV_\fund^a} = \nabla \varphi
    \end{equation}
for some $\varphi = \varphi_\boldtheta^{\sfV_\fund^a}$ in $\Omega^0(\sfG_\fund^a)$.
    Considering the diagonal unitary matrix $U$ given on $\Omega^0(\sfG_\fund^a)$ by $[U\eta](\boldu) = \eop^{-2\pi \iop \varphi(\boldu)} \eta(\boldu)$, we have
    \begin{align}
    \nonumber
        \langle \bolde_\boldu, U^*\Delta(\boldtheta) U \bolde_\boldv \rangle
=         \eop^{2\pi \iop (\varphi(\boldu) - \varphi(\boldv))}\langle \bolde_\boldu, \Delta(\boldtheta)  \bolde_\boldv \rangle
&=         \eop^{-2\pi \iop \psi_\boldtheta(u,v)} \eop^{2\pi \iop \psi_\boldtheta(\boldu, \boldv)}\langle \bolde_\boldu, \Delta(\boldzero)  \bolde_\boldv \rangle \\
& = \langle \bolde_\boldu, \Delta(\boldzero)  \bolde_\boldv \rangle .
    \end{align}
    Thus, the restriction \( \Delta(\boldtheta)|_{\sfV_\fund^a}\) is unitarily equivalent to \(\Delta(\boldzero)|_{\sfV_\fund^a}\) for each $\boldtheta$.
    
    By eigenvalue perturbation theory (e.g., compare \cite[Section~II.2.3]{Kato} or \cite{MorBurOve1997SIMAX}) for small \(\varepsilon\), the eigenvalues of \(A(\varepsilon,\boldtheta) := Q + \varepsilon \Delta(\boldtheta) \) near \(a\) enjoy the following asymptotic expansions
    \begin{equation}
        a+ c_{a,r} \varepsilon + O(\varepsilon^2),
    \end{equation}
    where \(\{c_{a,r} : r=1,\ldots, \#\sfV_\fund^a\}\) are the (common) eigenvalues of all \(\Delta(\boldtheta)|_{\sfV_\fund^a}\).
    It follows that  the bands of \(A(\varepsilon) := Q + \varepsilon \Delta\) near \(a\) have length \(\lesssim \varepsilon^2\), so the bands of \(H_{\mu Q}\) near \(\mu a\) have length \(\lesssim \mu^{-1}\), concluding the argument in this case.
    \bigskip

\noindent    \textbf{Case 2: \boldmath \(Q\) has an infinite flat path.} Suppose \(\{\boldu \in \calV : Q(\boldu) =a\}\) contains an infinite flat path, and let us again consider the bands of \(A(\varepsilon)\) near \(a\).
    As before, the eigenvalues of \(A(\varepsilon,\boldtheta)\) near \(a\) enjoy asymptotic expansions of the form
    \[
    a + c_{a,r}(\boldtheta) \varepsilon + O(\varepsilon^2),
    \]
    where \(c_{a,r}(\boldtheta)\) denote the eigenvalues of \(\Delta(\boldtheta)|_{\sfV_\fund^a}\).
    According to standard results for connected periodic graphs (e.g., \cite{KorotyaevSaburova2014JMAA, SabriYoussef2023JMP}) it cannot happen that all bands of \(\Delta|_{Q^{-1}\{a\}}\) are flat.
    Thus, up to some \(O(\varepsilon^2)\) errors, the bands of \(A(\varepsilon)\) near \(a\) are supplied by  $a + \varepsilon J_i$ where $\{J_i\}$ denote the bands of \(\Delta|_{Q^{-1}\{a\}}\); since not all bands of \(\Delta|_{Q^{-1}\{a\}}\) are flat, at least one of the bands of $A(\varepsilon)$ near the point $a$ has length of order \(\varepsilon\).
    It follows that the spectrum of \(H_{\mu Q}\) has length of order one, concluding the proof.
\end{proof}

It is clear from the discussion and proof that the asymptotic lower bound on the measure of the spectrum of $H_{\mu Q}$ is dictated by the measure of the spectrum of the free operator restricted to subgraphs  on which $Q$ is constant.
In special cases, one can sometimes derive explicit lower bounds. For instance, in the case $\calG = \bbZ^d$ (with nearest-neighbor connections between vertices) if \(Q\) is \(\boldq\) periodic (i.e.\ $Q(\boldu + q_j \bolde_j) \equiv Q(\boldu)$) with  \(\min q_j=1\), then the associated operator is \emph{separable} and this allows one to prove quantitative lower bounds.
Without loss, assume \(q_d = q_{d-1} = \cdots = q_{d-m+1}=1\). 
It follows that there is some \(W:\bbZ^{d-m} \to \bbR\) that is \((q_1,...,q_{d-m})\)-periodic and that satisfies
\begin{equation}
Q(\boldu) = W(u_1,\ldots,u_{d-m}), \quad \boldu = (u_1,\ldots,u_d) \in \bbZ^d.
\end{equation}
Write 
\[
[\Delta_j \psi](\boldu) = \psi(\boldu-\bolde_j) + \psi(\boldu+\bolde_j), \quad
\Delta_{(i,j)} = \Delta_i + \Delta_{i+1} + \cdots + \Delta_j.\]
Viewing 
\[ \ell^2(\bbZ^d) \cong \ell^2(\bbZ^{d-m}) \otimes \ell^2(\bbZ^m),\]
we can write
\begin{equation}
H_{\mu Q} \cong \left(\Delta_{(1,d-m)}+ \mu W\right) \otimes I + I \otimes \Delta_{(d-m+1,d)}
\end{equation}
from which it follows that
\begin{align}
\spectrum H_{\mu Q}
& = \spectrum(\Delta_{(1,d-m)}+\mu W) + \spectrum \Delta_{(d-m+1,d)} \\
& =\spectrum(\Delta_{(1,d-m)}+\mu W) + [-2m,2m].
\end{align}
Consequently,
\begin{equation} \label{eq:measureNotToZero}
\Leb\spectrum H_{\mu Q} \geq 4mr,
\end{equation}
where \(r = \#\{ Q(n) : n \in \bbZ^d\}\).
\bigskip

\bibliographystyle{abbrv}
\bibliography{refs}

\end{document}